\newtheorem{theorem}{Theorem}[section]
\newtheorem{corollary}[theorem]{Corollary}
\newtheorem{lemma}[theorem]{Lemma}
\newtheorem{proposition}[theorem]{Proposition}
\theoremstyle{definition}
\newtheorem{definition}[theorem]{Definition}
\newtheorem{remark}[theorem]{Remark}
\theoremstyle{remark}
\renewcommand{\theclaim}{\textup{\theclaim}}
\newtheorem*{acknowledgements}{Acknowledgements}
\numberwithin{equation}{section}
\def\openone
\newbox\ipbox
\newcommand{\ip}[2]{\left\langle #1\, , \,#2\right\rangle}
\newcommand{\diracb}[1]{\left\langle #1\mathrel{\mathchoice

{\setbox\ipbox=\hbox{$\displaystyle \left\langle\mathstrut
#1\right.$}

\vrule height\ht\ipbox width0.25pt depth\dp\ipbox}

{\setbox\ipbox=\hbox{$\textstyle \left\langle\mathstrut
#1\right.$}

\vrule height\ht\ipbox width0.25pt depth\dp\ipbox}

{\setbox\ipbox=\hbox{$\scriptstyle \left\langle\mathstrut
#1\right.$}

\vrule height\ht\ipbox width0.25pt depth\dp\ipbox}

{\setbox\ipbox=\hbox{$\scriptscriptstyle \left\langle\mathstrut
#1\right.$}

\vrule height\ht\ipbox width0.25pt depth\dp\ipbox}

}\right. }
\newcommand{\dirack}[1]{\left. \mathrel{\mathchoice

{\setbox\ipbox=\hbox{$\displaystyle \left.\mathstrut
#1\right\rangle$}

\vrule height\ht\ipbox width0.25pt depth\dp\ipbox}

{\setbox\ipbox=\hbox{$\textstyle \left.\mathstrut
#1\right\rangle$}

\vrule height\ht\ipbox width0.25pt depth\dp\ipbox}

{\setbox\ipbox=\hbox{$\scriptstyle \left.\mathstrut
#1\right\rangle$}

\vrule height\ht\ipbox width0.25pt depth\dp\ipbox}

{\setbox\ipbox=\hbox{$\scriptscriptstyle \left.\mathstrut
#1\right\rangle$}

\vrule height\ht\ipbox width0.25pt depth\dp\ipbox}

} #1\right\rangle}
\newcommand{\bz}{\mathbb{Z}}
\newcommand{\M}{\mathcal{M}}
\newcommand{\B}{\mathcal{B}}
\newcommand{\be}{\mathbb{E}}
\newcommand{\bn}{\mathbb{N}}
\def\blfootnote{\xdef\@thefnmark{}\@footnotetext}
\newcommand{\Span}{\overline{\operatorname*{span}}}
\def\C{\mathcal{C}}
\def\F{\mathcal{F}}
\def\be{\mathbb{E}}
\def\H{\mathcal{H}}
\def\-{^{-1}}
\def\B{\mathcal{B}}
\def\m{\mathfrak m}
\def\O{\mathcal{O}}
\def\K{\mathcal{K}}
\begin{document}

\title[Representations of Cuntz algebras associated to quasi-stationary Markov measures]{Representations of Cuntz algebras associated to quasi-stationary Markov measures}
\author{Dorin Ervin Dutkay}

\address{[Dorin Ervin Dutkay] University of Central Florida\\
	Department of Mathematics\\
	4000 Central Florida Blvd.\\
	P.O. Box 161364\\
	Orlando, FL 32816-1364\\
U.S.A.\\} \email{Dorin.Dutkay@ucf.edu}

%

\author{Palle E.T. Jorgensen}
\address{[Palle E.T. Jorgensen]University of Iowa\\
Department of Mathematics\\
14 MacLean Hall\\
Iowa City, IA 52242-1419\\}\email{palle-jorgensen@uiowa.edu}

\thanks{} 
\subjclass[2010]{47B32, 47A67, 47D07, 43A45, 42C40, 65T60, 60J27.}
\keywords{Representations in Hilbert space, wavelet representation, sigma-Hilbert space, spectral theory, harmonic analysis, absolute continuity vs singular, dichotomy, infinite product measures, Markov measures, $C^*$-algebra, Cuntz algebras, universal representation.}

\begin{abstract}
   In this paper, we answer the question of equivalence, or singularity, of two given quasi-stationary Markov measures on one-sided infinite words, and the corresponding question of equivalence of associated Cuntz algebra $\O_N$ representations. We do this by associating certain monic representations of $\O_N$ to quasi-stationary Markov measures, and then proving that equivalence for pairs of measures is decided by unitary equivalence of the corresponding pair of representations.
\end{abstract}
\maketitle \tableofcontents

\section{Introduction}

Our main theme is that of studying equivalence of pairs of measures arising in symbolic dynamics, and giving answers in terms of the representations used in generating the particular system. Departing from earlier work, we study systems derived from one-sided endomorphisms, and our measures will typically not be Gaussian.

    In our setting, we will study measures on one-sided infinite words in a finite alphabet, say realized as  $\bz_N$ (the cyclic group of order $N$), and our representations will be representations of the Cuntz algebra $\O_N$, \cite{Cu77, Cu79}. By this we mean an assignment of isometries to every letter in the fixed finite alphabet, in such a way that the distinct $N$ isometries in a Hilbert space $\H$, have orthogonal ranges, adding up to the identity operator in $\H$. This particular family of representations is motivated in part by quantization: since there is no symmetry or anti-symmetry restriction on occupancy of states, this representation suggests Boltzmann statistics.  Now the ``quantized'' system must be realized as an $L^2$-space with respect to a suitable measure on $\K_N$, the Cantor group of infinite words on $N$ letters.  Because of the setting, this will typically not be possible with the use of more traditional Gaussian measures (on infinite product spaces); and as an alternative we suggest a family of quasi-stationary Markov measures. Now the symbolic representation of $\K_N$ suggests a one-sided shift to the left, and a system of $N$ inverse branches of shift to the right,  by filling in a letter from $\bz_N$. This fact, in turn, suggests the use of Markov measures, see e.g., \cite{Ak12, JoPa12}.
    
      As  $C^*$-algebras (denoted $\O_N$), the Cuntz algebras are indexed by an integer $N > 1$, where $N$ is the number of generators, see Definition \ref{def2.0}; but rather than the $C^*$-algebras themselves, it is their representations that offer surprises. Indeed, by analogy to the theory of representations of infinite discrete groups,  for $\O_N$, it is she set of equivalence classes of its representations that offer surprising insight into such diverse areas as harmonic analysis, and also into a host of problems from pure and applied mathematics: wavelets, signal processing, ergodic theory, mathematical physics, and more; see e.g., \cite{DuJo06a, DuJo12, DuJo11a, DuJo07a, Jor06, BrJo02}.

   Now $\O_N$ is a simple, purely infinite $C^*$-algebra, \cite{Cu77}, and its $K$-groups are known. But the question of ``finding'' its irreducible representations is a highly subtle one. In fact, it is known \cite{Gli60} that the equivalence classes of representations of $\O_N$,  for fixed $N$,  does not even lend itself to a Borel cross section; more precisely, the set of  equivalence classes, under unitary equivalence, does not admit a parameterization in the measurable Borel category, hence does not admit a Borel cross-section. Intuitively, the representations defy classification.

     Nonetheless, special families of inequivalent representations have been found up to now, and they have applications in mathematical physics \cite{BrJo02, Bur04, GN07, AK08, Kaw03, Kaw06, Kaw09, KHL09}, in wavelets \cite{DuJo08a, DuJo07a, Jor06, Jor01}, harmonic analysis \cite{Str89, DuJo9a, DuJo07b}, and in fractal geometry \cite{DuJo06a, DuJo11a}.  Hence it is of interest to identify both discrete and continuous series of representations of $\O_N$. The particular representations considered here (Definition \ref{def2.7}) turn out to be directly associated with families of Markov measures (and therefore Markov processes); and the correspondence in both directions yields new insights, see sections 3 and 4 below. For the representations, the question is irreducibility, and for the  Markov measures, it is ergodicity and ``properties at infinity''.

      When associated representations of the Cuntz algebra are brought to bear, we arrive at useful non-commutative versions of these (commutative) symbolic shift mappings. For earlier related work, see e.g., \cite{DPS14, DuJo12, Sk970, GPS95}.

      Now by comparison, in the more traditional instances of the dynamics problem in its commutative and non-commutative guise \cite{BrRo81, Hi80}, there are three typical methods of attacking the question of equivalence or singularity of two measures: (1) Kakutani's theorem for infinite-product measures \cite{Ka48}, which asserts that two infinite product measures are either equivalent or mutually singular, (2)  methods based  on entropy considerations, and (3) the method of using the theory of reproducing kernels; see \cite{Hi80}. Because of the nature of our setting, one-sided shifts, commutative and non-commutative, we must depart from the setting of Gaussian measures. As a result, of these three traditional approaches to the question of equivalence or singularity of two measures, vs equivalence of representations, only ideas from (1) seem to be applicable to our present setting.
In the present paper, we develop this and we answer the question for quasi-stationary Markov measure in Corollary \ref{cor2.14}, and Theorem \ref{th2.15}. In Theorem \ref{th3.7} we show that the quasi-stationary Markov measures are ergodic, and that the associated  $\O_N$ representations are irreducible.

      Recently there has been an increased interest in use of the Cuntz algebras and their representations in dynamics (including the study of fractals, and geometric measure theory), in ergodic theory, and in quantization questions from physics. Perhaps this is not surprising since Cuntz algebras are infinite algebras on a finite number of generators, and defined from certain relations; in this case, the Cuntz relations. By their nature, these representations reflect intrinsic selfsimilar inherent in the problem at hand; and thus they serve ideally to encode iterated function systems (IFSs), their dynamics, and their measures. At the same time, the $\O_N$-representations offer (in a more subtle way) a new harmonic analysis of IFS-fractal measures. Even though the Cuntz algebras initially entered into the study of operator-algebras and physics, in recent years these same Cuntz algebras, and their representation, have found increasing use in pure and applied problems, such as wavelets, fractals, and signals.
      
      The paper is structured as follows: in section 2 we recall some definitions and facts about Cuntz algebras and their monic representations. In section 3 we define quasi-stationary Markov measures (Definition \ref{def2.1}), present some statements equivalent to the quasi-stationary condition and some consequences (Proposition \ref{pr2.13}, Proposition \ref{pr3.3}), show that such Markov measures give rise to monic representations of the Cuntz algebra (Theorem \ref{tha2.3}), and show that these measures are ergodic for the shift and the $\O_N$-representations are irreducible (Theorem \ref{th3.7}). In section 4, we study when two quasi-stationary Markov measures are equivalent or mutually singular, and the similar question for the associated representations of the Cuntz algebra. We present a dichotomy theorem (Theorem \ref{th4.1}) which shows that these are the only two possibilities, in the spirit of Kakutani's work \cite{Ka48}. We present some necessary and sufficient conditions for equivalence and similarly for singularity (Proposition \ref{pr2.12}). In Theorem \ref{th2.15}, we show that, under slightly stronger assumptions, the Markov measure is equivalent to a stationary one and so are the associated representations of $\O_N$.

\section{Preliminaries: The Cuntz algebra and symbolic dynamics}
 We set the stage for our correspondence between measures on the Cantor group $\K_N$ on the one side, and representations of the $C^*$-algebra $\O_N$, on the other. Our measures here are prescribed by a fixed system of Markov transition matrices; we call them ``quasi-stationary Markov measures''.

\begin{definition}\label{def2.0}
Let $N\geq 2$. The Cuntz algebra $\O_N$ is the $C^*$-algebra generated by a system of $N$ isometries $(S_i)_{i\in\bz_N}$ satisfying the {\it Cuntz relations}
\begin{equation}
S_i^*S_j=\delta_{ij}I,\quad (i,j\in\bz_N),\quad \sum_{i\in\bz_N}S_iS_i^*=I.
\label{eq2.0.1}
\end{equation}
\end{definition}
\begin{definition}\label{def0.1}
Fix an integer $N\geq 2$. Let $\bz_N:=\{0,1,\dots,N-1\}$. Let $(S_i)_{i\in\bz_N}$ be a representation of the Cuntz algebra $\O_N$ on a Hilbert space $\H$. We will call elements in $\bz_N^k$ {\it words of length $k$}. 
We denote by $\K=\K_N=\bz_N^{\bn}$, the set of all infinite words.
Given two finite words $\alpha=\alpha_1\dots\alpha_n$, $ \beta=\beta_1\dots\beta_m$, we denote by $\alpha\beta$ the concatenation of the two words, so $\alpha\beta=\alpha_1\dots\alpha_n\beta_1\dots\beta_m$. Similarly, for the case when $\beta$ is infinite. Given a word $\omega=\omega_1\omega_2\dots$, and $k$ a non-negative integer smaller than its length, we denote by 
$$\omega|k:=\omega_1\dots\omega_k,$$
the truncated word.

For a finite word $I=i_1\dots i_n$, we denote by 
$$S_I:=S_{i_1}\dots S_{i_n}.$$

We define $\mathfrak A_N$ to be the abelian subalgebra of $\O_N$ generated by $S_IS_I^*$, for all finite words $I$. 
As a $C^*$-algebra, $\mathfrak A_N$ is naturally isomorphic to $C(\K_N)$, the continuous functions on the Cantor group $\K_N$, see Definition \ref{defcantor}.

We say that a subspace $M$ is {\it $S_i^*$-invariant} if $S_i^*M\subset M$ for all $i\in\bz_N$. Equivalently $$P_MS_i^*P_M=S_i^*P_M,$$ where $P_M$ is the projection onto $M$. We say that $M$ is {\it cyclic } for the representation if 
$$\Span\{S_IS_J^*v : v\in M, I,J\mbox{ finite words }\}=\H.$$

\end{definition}

\begin{definition}\label{defcantor}
Fix an integer $N\geq 2$.  The {\it Cantor group on $N$ letters} is 
$$\K=\K_N=\prod_{1}^\infty\bz_N=\{(\omega_1\omega_2\dots) : \omega_i\in \bz_N\mbox{ for all }i=1,2,\dots\},$$
an infinite Cartesian product.

The elements of $\K_N$ are infinite words.
On the Cantor group, we consider the product topology. We denote by $\B(\K_N)$ the sigma-algebra of Borel subsets of $\K_N$. We denote by $\M(\K_N)$ the set of all finite Borel measures on $\K_N$.

Denote by $\sigma$ the shift on $\K_N$, $\sigma(\omega_1\omega_2\dots)=\omega_2\omega_3\dots$. Define the inverse branches of $\sigma$: for $i\in\bz_N$, $\sigma_i(\omega_1\omega_2\dots)=i\omega_1\omega_2\dots$.

For a finite word $I=i_1\dots i_k\in \bz_N^k$, we define the corresponding {\it cylinder set }

\begin{equation}
\C(I)=\{\omega\in \K_N : \omega_1=i_1,\dots,\omega_k=i_k\}=\sigma_{i_1}\dots\sigma_{i_n}(\K_N).
\label{eqcantor0}
\end{equation}
\end{definition}

\begin{definition}\label{def2.3}
Let $(S_i)_{i\in\bz_N}$ be a representation of the Cuntz algebra $\O_N$ on a Hilbert space $\H$. Then we define the projection valued function $P$ on the Borel sigma-algebra $\B(\K_N)$ by defining it on cylinders first
\begin{equation}
P(\C(I))=S_IS_I^*\mbox{ for any finite word }I.
\label{eq2.3.1}
\end{equation}
and then extending it by the usual Kolmogorov procedure (see \cite{DHJ13} for details). We call this, the {\it projection valued measure associated to the representation}. 
This projection valued measure then induces a representation $\pi$ of bounded (and of continuous) functions on $\K_N$, by setting
\begin{equation}
\pi(f)=\int_{\K_N} f(\omega)\,dP(\omega).
\label{eq2.3.2}
\end{equation}

For every $x\in \H$, define the Borel measure $\m_x$ on $\K_N$ by
\begin{equation}
\m_x(A)=\ip{x}{P(A)x},\quad(A\in\B(\K_N)).
\label{eq2.3.3}
\end{equation}
Using \eqref{eq2.3.2} and \eqref{eq2.3.3}, and the property
$$P(A\cap B)=P(A)P(B),\quad(A,B\in\B(\K_N)),$$
one obtains
\begin{equation}
\left\|\pi(f)x\right\|_\H^2=\int|f|^2\,d\m_x.
\label{eq2.3.4}
\end{equation}

We will also use the notations
$$P(I)=P(\C(I))\mbox{ for }I=i_1\dots i_n\mbox{ and $P(\omega)=P(\{\omega\})$ for $\omega\in\K_N$}.$$
\end{definition}

\begin{definition}\label{def2.4}
We say that a representation of the Cuntz algebra $\O_N$ on a Hilbert space $\H$ is {\it monic} if there is a cyclic vector $\varphi$ in $\H$ for the abelian subalgebra $\mathfrak A_N$, i.e., 
$$\Span\{S_IS_I^*\varphi : I\mbox{ finite  word }\}=\H.$$
\end{definition}

\begin{definition}\label{def2.7}
A {\it monic system} is a pair $(\mu,(f_i)_{i\in\bz_N})$ where $\mu$ is a finite Borel measure on $\K_N$ and $f_i$ are some functions on $\K_N$ such that $\mu\circ\sigma_i^{-1}\ll\mu$ for all $i\in\bz_N$ and 
\begin{equation}
\frac{d(\mu\circ\sigma_i^{-1})}{d\mu}=|f_i|^2,
\label{eq2.5.1}
\end{equation}
for some functions $f_i\in L^2(\mu)$ with the property that 
\begin{equation}
f_i(x)\neq 0\mbox{ for $\mu$-a.e. $x$ in $\sigma_i(\K_N)$}.
\label{eq2.5.2}
\end{equation}

We say that a monic system is {\it nonnegative} if $f_i\geq0$ for all $i\in\bz_N$.

The representation of $\O_N$ associated to a monic system is 
\begin{equation}
S_if=f_i(f\circ\sigma),\quad(i\in\bz_N,f\in L^2(\mu)),
\label{eq2.5.3} 
\end{equation}
where $\sigma$ is the one-sided shift, see Definition \ref{defcantor} and \cite[Theorem 2.7]{DJ14}.

We say that this representation $(S_i)_{i\in\bz_N}$ of the Cuntz algebra is nonnegative if the monic system is. 
\end{definition}

\begin{theorem}\label{th2.5}\cite{DJ14}
Let $(S_i)_{i\in\bz_N}$ be a representation of $\O_N$. The representation is monic if and only if it is unitarily equivalent to a representation associated to a monic system.
\end{theorem}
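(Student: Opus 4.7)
The strategy is to prove the two implications of the equivalence separately, with the forward direction being mostly verification and the converse containing the main content.

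For the easy direction, suppose $(S_i)$ arises from a monic system $(\mu,(f_i))$ acting on $L^2(\mu)$ by $S_i g = f_i(g\circ\sigma)$. I would show that the constant function $\varphi\equiv 1$ is cyclic for $\mathfrak A_N$. By Definition \ref{def2.3} and the structure of the representation, $S_I S_I^* = P(\C(I))$ acts as multiplication by $\chi_{\C(I)}$ on $L^2(\mu)$, so $S_I S_I^*\varphi=\chi_{\C(I)}$. Since linear combinations of indicator functions of cylinder sets are dense in $L^2(\mu)$, $\varphi$ is cyclic.

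For the nontrivial direction, given a monic representation $(S_i)_{i\in\bz_N}$ on $\H$ with cyclic vector $\varphi$ for $\mathfrak A_N$, I set $\mu:=\m_\varphi$ and define $U:L^2(\mu)\to\H$ by $U(g):=\pi(g)\varphi$. By \eqref{eq2.3.4}, $U$ is an isometry; cyclicity of $\varphi$ makes it surjective, since $U(\chi_{\C(I)})=S_IS_I^*\varphi$ and such vectors span a dense subspace of $\H$. I then set $f_i:=U^{-1}(S_i\varphi)\in L^2(\mu)$. First, $f_i$ is supported on $\C(i)=\sigma_i(\K_N)$: translating the identity $(I-S_iS_i^*)S_i=0$ through $U$ yields $\chi_{\C(i)^c}f_i=0$. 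Second, the intertwining $U^{-1}S_i U\,g = f_i(g\circ\sigma)$ follows from the commutation $\pi(g\circ\sigma)S_i=S_i\pi(g)$, itself a consequence of $\sum_j S_j\pi(g)S_j^*=\pi(g\circ\sigma)$ together with $S_j^*S_i=\delta_{ij}I$. Third, the isometry property $\|S_i g\|=\|g\|$ translates to $\int|f_i|^2|g\circ\sigma|^2\,d\mu=\int|g|^2\,d\mu$; since $f_i$ is supported on $\C(i)$, where $\sigma$ is the bijective inverse of $\sigma_i$, this gives $\mu\circ\sigma_i^{-1}=|f_i|^2\mu$, so in particular $\mu\circ\sigma_i^{-1}\ll\mu$ with Radon--Nikodym derivative $|f_i|^2$.

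The step I expect to be the main obstacle is verifying the nonvanishing condition \eqref{eq2.5.2}, that $f_i\neq 0$ $\mu$-a.e.\ on $\C(i)$. I would argue by contradiction: if $E\subset\C(i)$ satisfies $\mu(E)>0$ but $f_i=0$ on $E$, then $P(E)\varphi\neq 0$ and $U^{-1}(P(E)\varphi)=\chi_E$. Since $P(E)\leq P(\C(i))=S_iS_i^*$, write $P(E)\varphi=S_i(S_i^*P(E)\varphi)=S_i U(w)$ where $w:=U^{-1}(S_i^*P(E)\varphi)$. Transporting through the intertwining established above, this reads $\chi_E = f_i\cdot(w\circ\sigma)$ in $L^2(\mu)$; but the right-hand side vanishes on $E$ while the left-hand side equals $1$ there, a contradiction. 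This use of cyclicity of $\varphi$ to rule out the bad case is the conceptual crux of the argument.
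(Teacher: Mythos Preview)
The paper does not give a proof of Theorem~\ref{th2.5}; it is simply quoted from \cite{DJ14}. So there is no in-paper proof to compare against.

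That said, your argument is correct and is essentially the standard one. The forward direction is fine once one checks (as you do implicitly, using \eqref{eq2.5.2}) that the range of $S_i$ is all of $L^2(\C(i),\mu)$, so that $S_IS_I^*$ really acts as multiplication by $\chi_{\C(I)}$. For the converse, your construction of $\mu=\m_\varphi$, the unitary $U$, and $f_i=U^{-1}(S_i\varphi)$ is the right one; the intertwining $\pi(g\circ\sigma)S_i=S_i\pi(g)$ holds on the dense subspace of bounded $g$ and extends by continuity, and the isometry of $S_i$ gives both the Radon--Nikodym relation \eqref{eq2.5.1} and the fact that $g\mapsto f_i(g\circ\sigma)$ is well defined as an $L^2$-isometry. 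Your contradiction argument for \eqref{eq2.5.2} is correct: writing $P(E)\varphi=S_iU(w)$ for $E\subset\C(i)$ and transporting through $U$ forces $\chi_E=f_i\cdot(w\circ\sigma)$ in $L^2(\mu)$, which is impossible if $f_i$ vanishes on a set $E$ of positive measure. One small technical point worth making explicit is that the relation $\pi(h)U(k)=U(hk)$, needed in the intertwining step, holds for bounded $h$ and arbitrary $k\in L^2(\mu)$ by approximating $k$ with simple functions; you use this with $h=g\circ\sigma$ and $k=f_i$.
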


\section{Representations of the Cuntz algebra and quasi-stationary Markov measures}

 Here we define quasi-stationary Markov measures on $\K_N$ and we associate to them monic representations of the Cuntz algebra $\O_N$. We show (Theorem \ref{th3.7}) that the quasi-stationary Markov measures are ergodic, and that the associated $\O_N$ representations are irreducible.

\begin{definition}\label{def2.1} 
An $N\times N$ matrix $T$ is called {\it stochastic } if the sum of the entries in each row is 1, i.e.,
\begin{equation}
Te=e\mbox{ for all $n\in\bn$, where }e= \begin{pmatrix}
	1\\
	1\\
	\vdots\\
	1
\end{pmatrix},
\label{eq2.1.2}
\end{equation}
(We use column vectors for multiplication on the right and row vectors for multiplication on the left).

Let $\lambda=(\lambda_0,\dots,\lambda_{N-1})$ be a {\it positive probability vector}, i,e., a vector of positive numbers such that $\sum_{i\in\bz_N}\lambda_i=1$ and let $\{T^{(n)}\}_{n\in\bn}$ be a sequence of stochastic $N\times N$ matrices with positive entries such that
\begin{equation}
\lambda T^{(1)}=\lambda
\label{eq2.1.1}
\end{equation}
\begin{equation}
\lambda_i>0, T_{i,j}^{(n)}>0,\quad(i,j\in\bz_N,n\in\bn).
\label{eq2.1.3}
\end{equation}
Define the Borel probability measure $\mu=\mu_{\lambda,T}$ on $\K_N$, first, on cylinder sets: for $I=i_1\dots i_n$
\begin{equation}
\mu(\C(I))=\lambda_{i_1}T_{i_1,i_2}^{(1)}T_{i_2,i_3}^{(2)}\dots T_{i_{n-1},i_n}^{(n-1)},
\label{eq2.1.4}
\end{equation}
and then extending it to the Borel sigma-algebra, using the Kolmogorov extension theorem (see Remark \ref{rem2.2} below). We say that $\mu=\mu_{\lambda,T}$ is the {\it Markov measure associated to} $\lambda$ and $T$.

We say that the Markov measure $\mu$ is {\it quasi-stationary} if 
\begin{equation}
\sum_{n=1}^\infty \left|\frac{T_{x_n,x_{n+1}}^{(n)}}{T_{x_n,x_{n+1}}^{(n+1)}}-1\right|<\infty,\mbox{ for all }x_1x_2\dots\in\K_N.
\label{eq2.1.5}
\end{equation}
\end{definition}

\begin{remark}\label{rem2.2}
We check the Kolmogorov consistency conditions: take $I=i_1\dots i_{n}$. Then $\C(i_1\dots i_{n})$ is the disjoint union of $\C(i_1\dots i_nj)$, $j\in\bz_N$. We have 
$$\sum_{j\in\bz_N}\mu(\C(i_1\dots i_nj))=\sum_{j\in\bz_N}\lambda_{i_1}T_{i_1,i_2}^{(1)}\dots T_{i_{n-1},i_n}^{(n-1)}T_{i_n,j}^{(n)}=\lambda_{i_1}T_{i_1,i_2}^{(1)}\dots T_{i_{n-1},i_n}^{(n-1)}=\mu(\C(i_1\dots i_n)).$$

\end{remark}

Here are some conditions, equivalent to the quasi-stationary condition, which are easier to check:
\begin{proposition}\label{pr2.13}
Let $T^{(n)}$, $n\in\bz$, be a sequence of stochastic matrices with positive entries. The following statements are equivalent
\begin{enumerate}
	\item For all $x_1x_2\dots$ in $\K_N$
	$$\sum_{n=1}^\infty \left|\frac{T_{x_n,x_{n+1}}^{(n)}}{T_{x_n,x_{n+1}}^{(n+1)}}-1\right|<\infty.$$
	\item For all $i,j\in\bz_N$,
	\begin{equation}
\sum_{n=1}^\infty \left|\frac{T_{i,j}^{(n)}}{T_{i,j}^{(n+1)}}-1\right|<\infty.
\label{eq2.13.1}
\end{equation}
\item For all $i,j\in\bz_N$, 
\begin{equation}
\sum_{n=1}^\infty|T_{i,j}^{(n)}-T_{i,j}^{(n+1)}|<\infty,
\label{eq2.13.2}
\end{equation}
and
\begin{equation}
\inf_n T_{i,j}^{(n)}>\infty.
\label{eq2.13.3}
\end{equation}
\end{enumerate} 
Under these conditions, the matrices $T^{(n)}$ converge to a stochastic matrix $T^{(\infty)}$ with positive entries. 
\end{proposition}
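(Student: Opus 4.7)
The plan is to prove the two equivalences (i)$\Leftrightarrow$(ii) and (ii)$\Leftrightarrow$(iii) separately, and then read off the convergence assertion from (iii). The direction (ii)$\Rightarrow$(i) is a pointwise bound: for any infinite word $x_1x_2\dots$ the $n$-th summand in (i) equals one of the $N^2$ summands of (ii), so
\begin{equation*}
\sum_{n=1}^\infty \left|\frac{T_{x_n,x_{n+1}}^{(n)}}{T_{x_n,x_{n+1}}^{(n+1)}}-1\right| \le \sum_{i,j\in\bz_N}\sum_{n=1}^\infty \left|\frac{T_{i,j}^{(n)}}{T_{i,j}^{(n+1)}}-1\right| < \infty.
\end{equation*}
The converse (i)$\Rightarrow$(ii) is the only real combinatorial step: fixing a pair $(i,j)$, I would apply (i) to carefully chosen words that realize $(i,j)$ as a consecutive pair often enough. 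If $i=j$, the constant word $iii\dots$ does this at every position. If $i\ne j$, no single word can make $(x_n,x_{n+1})=(i,j)$ for all $n$ (that would force $x_{n+1}$ to equal both $i$ and $j$), so I would apply (i) to the two periodic words $ijij\dots$ and $\ell\,ijij\dots$ (with an arbitrary prefix letter $\ell\in\bz_N$); the first captures $(i,j)$ at every odd $n$, the second at every even $n\ge 2$, and adding the two inequalities yields (ii).

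For (ii)$\Leftrightarrow$(iii) I would pivot on the identity
\begin{equation*}
\frac{T_{i,j}^{(n)}}{T_{i,j}^{(n+1)}}-1=\frac{T_{i,j}^{(n)}-T_{i,j}^{(n+1)}}{T_{i,j}^{(n+1)}}.
\end{equation*}
Since $T_{i,j}^{(n+1)}\le 1$ by the stochastic condition, (ii) already contains the summability half of (iii). For the positive-infimum half I would invoke the standard absolute-convergence criterion for infinite products: setting $r_k := T_{i,j}^{(k)}/T_{i,j}^{(k+1)}-1$, absolute summability $\sum_k|r_k|<\infty$ together with positivity of each factor $1+r_k$ forces $\prod_{k=1}^\infty(1+r_k)$ to converge to a strictly positive limit $L$, and the telescoping identity
\begin{equation*}
T_{i,j}^{(n)}=\frac{T_{i,j}^{(1)}}{\prod_{k=1}^{n-1}(1+r_k)}
\end{equation*}
then shows $T_{i,j}^{(n)}\to T_{i,j}^{(1)}/L>0$, so the sequence is bounded below by a positive constant. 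Conversely (iii) implies (ii) by dividing the same identity through by $\inf_m T_{i,j}^{(m)}>0$.

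For the concluding statement, (iii) makes each sequence $(T_{i,j}^{(n)})_n$ Cauchy in $\br$ (absolutely summable increments), so an entrywise limit $T^{(\infty)}$ exists and has entries bounded below by $\inf_n T_{i,j}^{(n)}>0$; passing to the limit in the finite identity $\sum_j T_{i,j}^{(n)}=1$ shows $T^{(\infty)}$ is stochastic. The one step that requires a genuine idea is the two-word construction in (i)$\Rightarrow$(ii) for $i\ne j$; once it is in place, the remaining arguments are elementary estimates together with the familiar absolute-convergence criterion for infinite products with positive factors.
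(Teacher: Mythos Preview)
Your proof is correct and follows essentially the same approach as the paper. The only cosmetic differences are that the paper handles (i)$\Rightarrow$(ii) by applying (i) to the word $x_{2n}=i$, $x_{2n+1}=j$ and then to the word with $i$ and $j$ interchanged (rather than prefixing a letter $\ell$), and the paper reads off the limit $T^{(\infty)}$ from the infinite product in (ii) rather than from the Cauchy argument in (iii); neither distinction is substantive.
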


\begin{proof}
Assume (i). Let $x_{2n}=i$, $x_{2n+1}=j$ for all $n\in\bn$. Then
$$\sum_n \left|\frac{T_{i,j}^{(2n)}}{T_{i,j}^{(2n+1)}}-1\right|+\left|\frac{T_{j,i}^{(2n+1)}}{T_{j,i}^{(2n+2)}}-1\right|<\infty.$$
Switching between $i$ and $j$ we obtain
$$\sum_n \left|\frac{T_{j,i}^{(2n)}}{T_{j,i}^{(2n+1)}}-1\right|+\left|\frac{T_{i,j}^{(2n+1)}}{T_{i,j}^{(2n+2)}}-1\right|<\infty.$$
Then
$$\sum_n \left|\frac{T_{i,j}^{(n)}}{T_{i,j}^{(n+1)}}-1\right|=\sum_n \left|\frac{T_{i,j}^{(2n)}}{T_{i,j}^{(2n+1)}}-1\right|+\left|\frac{T_{i,j}^{(2n+1)}}{T_{i,j}^{(2n+2)}}-1\right|<\infty.$$

Conversely, assume (ii) and take $x_1x_2\dots\in\K_N$. Then 
$$\sum_n \left|\frac{T_{x_n,x_{n+1}}^{(n)}}{T_{x_n,x_{n+1}}^{(n+1)}}-1\right|=\sum_{(i,j)\in\bz_N\times\bz_N}\sum_{n: (x_n,x_{n+1})=(i,j)}\left|\frac{T_{i,j}^{(n)}}{T_{i,j}^{(n+1)}}-1\right|<\infty.$$
This proves (i).

Condition (ii) implies that the infinite product 
$$A_{i,j}:=\prod_{n=1}^\infty \frac{T_{i,j}^{(n)}}{T_{i,j}^{(n+1)}}$$
is convergent to a positive number, for all $i,j\in\bz_N$. 
But $$T_{i,j}^{(n)}=T_{i,j}^{(1)}\frac{1}{\prod_{k=1}^{n-1} \frac{T_{i,j}^{(k)}}{T_{i,j}^{(k+1)}}}\rightarrow \frac{T_{i,j}^{(1)}}{A_{i,j}}=:T_{i,j}^{(\infty)}>0.$$
Since each matrix $T^{(n)}$ is stochastic, it follows that also the matrix $T^{(\infty)}$ is stochastic.

Assume now (ii) and we prove (iii). Since $T_{i,j}^{(n)}$ converges to a positive number, \eqref{eq2.13.3} follows. Then, since $T_{i,j}^{(n)}\leq 1$, we have
$$\sum_{n=1}^\infty|T_{i,j}^{(n)}-T_{i,j}^{(n+1)}|\leq \sum_{n=1}^\infty \left|\frac{T_{i,j}^{(n)}}{T_{i,j}^{(n+1)}}-1\right|<\infty.$$

Assume (iii) and we prove (ii).  Let $c$ be the infimum in \eqref{eq2.13.3}. We have
$$\sum_{n=1}^\infty \left|\frac{T_{i,j}^{(n)}}{T_{i,j}^{(n+1)}}-1\right|\leq \frac1c\sum_{n=1}^\infty|T_{i,j}^{(n)}-T_{i,j}^{(n+1)}|<\infty.$$
\end{proof}

\begin{proposition}\label{pr3.3}
Let $T^{(n)}$ be a sequence of stochastic matrices with positive entries, satisfying the quasi-stationary condition \eqref{eq2.1.5}. Let $T^{(\infty)}$ be the limit of the sequence $T^{(n)}$ as in Proposition \ref{pr2.13}. Let $v=(v_0,\dots,v_{N-1})$ be the Perron-Frobenius positive probability eigenvector for $T^{(\infty)}$, i.e., $vT^{(\infty)}=v$, $v_i>0$, $i\in\bz_N$ and $\sum_{i\in\bz_N}v_i=1$. Let $Q$ be the $N\times N$ matrix with identical rows equal to $v$. Then, for all $p\geq 1$, 
$$\lim_{n\rightarrow\infty}\prod_{k=p+1}^nT^{(k)}=\lim_{n\rightarrow\infty}T^{(p+1)}T^{(p+2)}\dots T^{(n)}=Q.$$

\end{proposition}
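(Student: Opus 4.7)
The plan is to use Dobrushin's ergodicity coefficient
$$\delta(M) := \tfrac{1}{2}\max_{i,i'}\sum_j |M_{i,j} - M_{i',j}|$$
for stochastic matrices $M$. Two classical facts are needed: (a) if $M$ has all strictly positive entries then $\delta(M) < 1$, and (b) for every row vector $u$ whose entries sum to zero and every stochastic $M$, $\|uM\|_1 \leq \delta(M)\|u\|_1$ (proved by writing $u$ as a positive multiple of the difference of two probability vectors). Proposition \ref{pr2.13} tells me that $T^{(n)} \to T^{(\infty)}$ entrywise and that $T^{(\infty)}$ has strictly positive entries, so (a) gives $\delta(T^{(\infty)}) < 1$, and by continuity there exist $K \in \bn$ and $c < 1$ with $\delta(T^{(n)}) \leq c$ for all $n \geq K$.

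Fix $p \geq 1$ and set $\Pi_n := T^{(p+1)}T^{(p+2)}\cdots T^{(n)}$. Since each row of $Q$ equals the Perron--Frobenius row eigenvector $v$, the relation $vT^{(\infty)} = v$ lifts to the matrix identity $QT^{(\infty)} = Q$. This produces the telescoping identity
$$\Pi_{n+1} - Q = \Pi_n T^{(n+1)} - QT^{(\infty)} = (\Pi_n - Q)T^{(n+1)} + Q(T^{(n+1)} - T^{(\infty)}),$$
in which every row of $\Pi_n - Q$ has entries summing to zero (difference of two stochastic matrices). Applying (b) row-wise and setting $a_n := \max_r \|(\Pi_n - Q)_{r,\cdot}\|_1$ then yields the scalar recursion
$$a_{n+1} \;\leq\; \delta(T^{(n+1)})\, a_n + \|v(T^{(n+1)} - T^{(\infty)})\|_1.$$

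The second term goes to zero by Proposition \ref{pr2.13}, and for $n \geq K$ the contraction factor is at most $c < 1$. A standard scalar-recursion lemma (if $a_{n+1} \leq c\,a_n + b_n$ with $c<1$, $b_n \to 0$, and $a_n$ bounded, then $a_n \to 0$) then closes the argument, giving $\Pi_n \to Q$ in the row-$\ell^1$ norm and hence entrywise. The argument is uniform in $p \geq 1$: only the tail behavior of $\{T^{(n)}\}$ is used, and $a_n$ is automatically bounded by $2$ regardless of the starting point. I expect the main obstacle to be marshalling the Dobrushin coefficient machinery cleanly (especially verifying (b)); once that is in place, the rest is a single telescoping identity and an elementary recursion.
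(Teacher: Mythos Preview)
Your argument is correct. The Dobrushin coefficient satisfies the two properties you invoke, the telescoping identity is valid, and the scalar recursion $a_{n+1}\le c\,a_n+b_n$ with $c<1$ and $b_n\to 0$ indeed forces $a_n\to 0$ (the uniform bound $a_n\le 2$ handles the finitely many indices before the contraction kicks in).

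The paper takes a very different route: it simply invokes \cite[Theorem 4.14, page 150]{Sen06} (Seneta's book on non-negative matrices), noting that the positivity of the entries of $T^{(n)}$ and $T^{(\infty)}$ makes $T^{(\infty)}$ regular, which is all that theorem requires. So the paper's proof is a one-line citation, whereas you have written out a self-contained argument that is, in effect, a direct proof of the relevant special case of Seneta's theorem. The trade-off is clear: the paper's version is shorter but relies on an external reference, while your version is longer but transparent and does not send the reader elsewhere. Both are perfectly acceptable; if you want to keep your version, you might still add a pointer to Seneta for context, since the Dobrushin-coefficient machinery you use is classical in that literature.
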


\begin{proof}
The result follows from \cite[Theorem 4.14, page 150]{Sen06}: the matrices $T^{(n)}$ and $T^{(\infty)}$ have positive entries, so $T^{(\infty)}$ is regular as required. 

\end{proof}

\begin{theorem}\label{tha2.3}
Let $\mu$ be a quasi-stationary Markov measure. Then $\mu\circ\sigma_j^{-1}\ll\mu$ for all $j\in\bz_N$ and
\begin{equation}
\frac{d(\mu\circ\sigma_j^{-1})}{d\mu}(x_1x_2\dots)=\delta_{j,x_1}\frac{\lambda_{x_2}}{\lambda_{x_1}T_{x_1,x_2}^{(1)}}F(x_2x_3\dots)=:f_j^2(x_1x_2\dots),\, f_j\geq 0,
\label{eqa2.3.2}
\end{equation}
where
\begin{equation}
F(x_1x_2\dots)=\prod_{n=1}^\infty\frac{T_{x_n,x_{n+1}}^{(n)}}{T_{x_n,x_{n+1}}^{(n+1)}}\mbox{ and }0<F(x_1x_2\dots)<\infty.
\label{eqa2.3.3}
\end{equation}
Also,
\begin{equation}
\frac{d(\mu\circ\sigma^{-1})}{d\mu}(x_1x_2\dots)=\frac{1}{F(x_1x_2\dots)}.
\label{eqa2.3.4}
\end{equation}

The operators $S_j$ on $L^2(\mu)$ defined by 
\begin{equation}
S_jf=f_j(f\circ\sigma),\quad(f\in L^2(\mu),j\in\bz_N)
\label{eqa2.3.4.0}
\end{equation}
form a nonnegative monic representation of the Cuntz algebra $\O_N$. 
\end{theorem}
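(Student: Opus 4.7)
My plan is to establish all the ingredients of a monic system $(\mu,(f_i)_{i\in\bz_N})$ in the sense of Definition \ref{def2.7}, and then invoke the construction \eqref{eq2.5.3} (together with Theorem 2.7 of \cite{DJ14}, which Definition \ref{def2.7} cites) to conclude that the operators $S_j$ form a representation of $\O_N$; the ``nonnegative'' qualifier is automatic from choosing $f_j$ to be the positive square root. The substantive work thus reduces to three items: (a) the infinite product $F$ converges to a positive finite number; (b) the Radon--Nikodym identities \eqref{eqa2.3.2} and \eqref{eqa2.3.4} hold; (c) $f_j\in L^2(\mu)$ and $f_j$ is nonzero $\mu$-a.e.\ on $\sigma_j(\K_N)=\C(j)$.

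Step (a) is elementary: writing each factor of $F$ as $1+a_n$ with $a_n=T^{(n)}_{x_n,x_{n+1}}/T^{(n+1)}_{x_n,x_{n+1}}-1$, the quasi-stationary hypothesis \eqref{eq2.1.5} is exactly the absolute summability $\sum_n|a_n|<\infty$, and the standard criterion then gives convergence of $F$ to a positive real number; positivity and finiteness of the individual factors follow from $T^{(n)}_{i,j}>0$. Step (c) will fall out once (b) is in hand: $f_j^2$ vanishes precisely on $\{x_1\neq j\}=\K_N\setminus\C(j)$ thanks to the $\delta_{j,x_1}$ factor, while $\int f_j^2\,d\mu=(\mu\circ\sigma_j^{-1})(\K_N)=1$ gives the $L^2$-bound.

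The heart of the proof is (b). I would verify
\[
\mu(\sigma_j^{-1}(\C(I)))=\int_{\C(I)}f_j^2\,d\mu
\]
for every cylinder $\C(I)$; the two sides then agree as finite Borel measures on $\K_N$, which simultaneously establishes $\mu\circ\sigma_j^{-1}\ll\mu$ and the identity \eqref{eqa2.3.2}. On $\C(I)=\C(i_1\dots i_n)$ with $i_1=j$ (the case $i_1\neq j$ being trivial), the prefactor $\lambda_{i_2}/(\lambda_{i_1}T^{(1)}_{i_1,i_2})$ is constant on $\C(I)$, so the remaining task is to compute $\int_{\C(I)}F(x_2x_3\dots)\,d\mu(x)$. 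I would do this by truncating $F$ to its first $m$ factors, integrating against $\mu$ over sub-cylinders $\C(i_1\dots i_m)$ where the Markov weights telescope, and passing $m\to\infty$ by dominated convergence. Uniform domination is available from the bound $\inf_n T^{(n)}_{i,j}>0$ furnished by Proposition \ref{pr2.13}(iii), which keeps each truncation of $F$ within a fixed positive interval. The identity \eqref{eqa2.3.4} for the full shift is handled the same way, except that the initial summation over the $N$ preimages is absorbed by the stationarity relation $\lambda T^{(1)}=\lambda$, leaving the reciprocal product $1/F$. The main technical obstacle is this dominated-convergence step, and it is precisely where the quasi-stationarity hypothesis — in the sharp equivalent form from Proposition \ref{pr2.13} — is used in an essential, non-cosmetic way.
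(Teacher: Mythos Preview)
Your approach is correct and genuinely different from the paper's. The paper does not integrate directly on cylinders; instead it invokes a martingale-type theorem from \cite{EnSh80}: with $\F_n$ the cylinder $\sigma$-algebra of depth $n$, one checks that $Z_n:=d(\mu\circ\sigma_j^{-1}|_{\F_n})/d(\mu|_{\F_n})$ is the partial product in your truncation, observes $Z_n\to Z_\infty$ pointwise, and then \cite[Theorems 4,5]{EnSh80} immediately yield $\mu\circ\sigma_j^{-1}\ll\mu$ with derivative $Z_\infty$. For \eqref{eqa2.3.4} the paper does not repeat the argument but quotes a general identity (their Lemma \ref{lem2.10}, from \cite{DJ14}) stating that for any monic system $\sum_j|f_j\circ\sigma_j|^{-2}=d(\mu\circ\sigma^{-1})/d\mu$, and then evaluates this sum using $\lambda T^{(1)}=\lambda$ exactly as you indicate. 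Your route is more elementary and self-contained, avoiding both external references; the paper's is shorter once those results are granted and makes transparent that the mechanism is martingale convergence of Radon--Nikodym densities along the filtration $(\F_n)$.

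One point in your outline needs tightening. The bound $\inf_n T^{(n)}_{i,j}>0$ from Proposition \ref{pr2.13}(iii) controls each individual ratio $T^{(n)}/T^{(n+1)}$ but \emph{not} an arbitrary finite product of them, so it does not by itself keep the truncations of $F$ in a fixed interval. What does work is the quasi-stationary hypothesis in the form of Proposition \ref{pr2.13}(ii): summing over the finitely many pairs $(i,j)\in\bz_N^2$ gives a constant $M$ with $\sum_{n}\bigl|T^{(n)}_{x_n,x_{n+1}}/T^{(n+1)}_{x_n,x_{n+1}}-1\bigr|\le M$ for every $x\in\K_N$, whence every partial product of $F$ (and of $1/F$) lies in $[e^{-M},e^{M}]$ uniformly. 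With this correction your dominated-convergence step goes through and the rest of the argument is fine.
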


\begin{proof}
We use \cite[Theorem 5]{EnSh80}. Consider, for $n\in\bn$, the sigma-algebras $\F_n$ generated by cylinder sets $\C(I)$ with $I$ of length $n$. Let $P_n$ be the restriction of $\mu$ to $\F_n$ and let $Q_n$ be the restriction of $\mu\circ\sigma_j^{-1}$ to $\F_n$. We claim that, for $n\geq 3$, $Q_n\ll P_n$ and
\begin{equation}
\frac{dQ_n}{dP_n}(x_1x_2\dots)=\delta_{j,x_1}\frac{\lambda_{x_2}}{\lambda_{x_1}T_{x_1,x_2}^{(1)}}\prod_{k=1}^{n-2}\frac{T_{x_{k+1},x_{k+2}}^{(k)}}{T_{x_{k+1},x_{k+2}}^{(k+1)}}=:Z_n(x_1x_2\dots x_n)=:Z_n(x_1x_2\dots)
\label{eqa2.3.1}
\end{equation}
Indeed, for $I=i_1\dots i_n$ we have
$$\mu(\sigma_j^{-1}(\C(I)))=\mu(\{x_1x_2\dots : j=i_1,x_1=i_2,\dots, x_{n-1}=i_n\})=\delta_{j,i_1}\lambda_{i_2}T_{i_2,i_3}^{(1)}\dots T_{i_{n-1},i_n}^{(n-2)}$$$$=Z_n(i_1\dots i_n)\cdot\lambda_{i_1}T_{i_1,i_2}^{(1)}T_{i_2,i_3}^{(2)}\dots T_{i_{n-1},i_n}^{(n-1)}=Z_n(i_1\dots i_n)\mu(\C(I)).$$
This implies \eqref{eqa2.3.1}.

Note that \eqref{eq2.1.5} implies that the infinite product in \eqref{eqa2.3.3} is convergent and that $0<F(x)<\infty$, for all $x\in\K_N$. Therefore 
$$Z_\infty(x_1x_2\dots):=\lim_n Z_n(x_1x_2\dots)=\delta_{j,x_1}\frac{\lambda_{x_2}}{\lambda_{x_1}T_{x_1,x_2}^{(1)}}F(x_2x_3\dots).$$
Since $Z_\infty(x_1x_2\dots)<\infty$ for all $x_1x_2\dots\in\K_N$, \cite[Theorems 4,5]{EnSh80} implies that $\mu\circ\sigma_j^{-1}\ll\mu$ and $\frac{d(\mu\circ\sigma_j^{-1})}{d\mu}=Z_\infty$, which is \eqref{eqa2.3.3}.

To check \eqref{eqa2.3.4}, we can proceed in the same way; or we can use the next lemma:

\begin{lemma}\label{lem2.10}\cite[Proposition 2.8]{DJ14}
Let $(\mu,(f_i)_{i\in\bz_N})$ be a monic system. Then $\mu\circ\sigma^{-1}\ll\mu$, and
\begin{equation}
\sum_{j\in\bz_N}\frac{1}{|f_j\circ\sigma_j|^2}=\frac{d(\mu\circ\sigma^{-1})}{d\mu}.
\label{eq1.8.1}
\end{equation}
\end{lemma}

In our context, we obtain that:
$$\frac{d(\mu\circ\sigma^{-1})}{d\mu}(x_1x_2\dots)=\sum_{j\in\bz_N}\frac{1}{\frac{d(\mu\circ\sigma_j^{-1})}{d\mu}\circ\sigma_j(x_1x_2\dots)}=\sum_{j\in\bz_N}\frac{\lambda_jT_{j,x_1}^{(1)}}{\lambda_{x_1}}\frac{1}{F(x_1x_2\dots)}$$$$=\frac{\lambda_{x_1}}{\lambda_{x_1}F(x_1x_2\dots)}=\frac{1}{F(x_1x_2\dots).}$$

Theorem \ref{th2.5} implies that the operators $S_j$ define a nonnegative monic representation of $\O_N$. 
\end{proof}

\begin{definition}\label{def2.11}
Let $\mu$ be a quasi-stationary Markov measure associated to $(\lambda,T)$. Let
\begin{equation}
f_j(x_1x_2\dots)=\delta_{j,x_1}\sqrt{\frac{\lambda_{x_2}}{\lambda_{x_1}T_{x_1,x_2}^{(1)}}F(x_2x_3\dots)},\quad(x_1x_2\dots\in\K_N)
\label{eq2.11.1}
\end{equation}
where 
\begin{equation}
F(x_1x_2\dots)=\prod_{n=1}^\infty\frac{T_{x_n,x_{n+1}}^{(n)}}{T_{x_n,x_{n+1}}^{(n+1)}}.
\label{eqa2.3.3a}
\end{equation}
We call the representation of the Cuntz algebra $\O_N$ on $L^2(\mu)$ defined by
\begin{equation}
S_jf=f_j(f\circ\sigma),
\label{eq2.11.2}
\end{equation}
{\it the representation of $\O_N$ associated to $(\lambda,T)$ (or associated to the measure $\mu$)}.
\end{definition}

\begin{theorem}\label{th3.7}
Let $\mu$ be a quasi-stationary Markov measure associated to $(\lambda,T)$ and let $(S_i)_{i\in\bz_N}$ the corresponding representation of $\O_N$. Then
\begin{enumerate}
	\item The measure $\mu$ is ergodic with respect to the endomorphism $\sigma$. 
	\item The representation of $\O_N$ is irreducible. 
\end{enumerate}

\end{theorem}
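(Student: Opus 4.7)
The plan is to prove (i) first, and then deduce (ii) from (i) together with the monic structure supplied by Theorem \ref{tha2.3} and Definition \ref{def2.7}.

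For (i), observe that if $\sigma^{-1}(A)=A$, then $A=\sigma^{-n}(A)\in\sigma(x_{n+1},x_{n+2},\dots)$ for every $n$, so $A$ lies in the tail $\sigma$-algebra $\T:=\bigcap_{n\geq 1}\sigma(x_{n+1},x_{n+2},\dots)$. It therefore suffices to show $\T$ is $\mu$-trivial, which I would do by proving that every $A\in\T$ is $\mu$-independent of every cylinder $\C(I)$, $I=i_1\dots i_p$. Writing $A=\sigma^{-n}(B_n)$ for each $n\geq p$, the Markov property yields
\[
\mu(A\cap\C(I))=\mu(\C(I))\sum_{j\in\bz_N}\bigl(T^{(p)}T^{(p+1)}\cdots T^{(n-1)}\bigr)_{i_p,j}\,P_{n,j}(B_n),
\]
where $P_{n,j}(B_n)$ denotes the probability of $B_n$ conditional on $x_n=j$, and analogously
\[
\mu(A)=\sum_{j\in\bz_N}\lambda^{(n)}_j\,P_{n,j}(B_n),\qquad\lambda^{(n)}:=\lambda T^{(1)}T^{(2)}\cdots T^{(n-1)}.
\]
Proposition \ref{pr3.3} gives $T^{(p+1)}\cdots T^{(n-1)}\to Q$ as $n\to\infty$, and since $T^{(p)}Q=Q$ (because $Q$ has identical rows $v$), we obtain $(T^{(p)}\cdots T^{(n-1)})_{i_p,j}\to v_j$. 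Similarly, $\lambda T^{(1)}=\lambda$ combined with Proposition \ref{pr3.3} yields $\lambda^{(n)}=\lambda T^{(2)}\cdots T^{(n-1)}\to\lambda Q=v$. Since $P_{n,j}(B_n)\in[0,1]$, letting $n\to\infty$ in both identities gives $\mu(A\cap\C(I))=\mu(A)\mu(\C(I))$; a monotone-class extension then promotes this to all Borel sets, and taking $C=A$ yields $\mu(A)^2=\mu(A)$, so $\mu(A)\in\{0,1\}$.

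For (ii), let $W\in B(L^2(\mu))$ commute with every $S_j$. Since commutants are $\ast$-closed, $W$ commutes with every $S_IS_I^*$, hence with $\mathfrak A_N$. On the monic representation from Definition \ref{def2.7}, one checks that $S_IS_I^*$ acts as multiplication by $\chi_{\C(I)}$ (the range of $S_I$ is exactly the subspace of functions supported on $\C(I)$, since $f_j\neq 0$ $\mu$-a.e.\ on $\sigma_j(\K_N)$), so $\mathfrak A_N$ generates $L^\infty(\mu)$ acting by multiplication, which is maximal abelian in $B(L^2(\mu))$. Thus $W=M_g$ for some $g\in L^\infty(\mu)$. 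Applying $WS_j=S_jW$ to the constant function $1$ gives $g\cdot f_j=f_j\cdot(g\circ\sigma)$; since $f_j\neq 0$ $\mu$-a.e.\ on $\sigma_j(\K_N)$ and $\K_N=\bigsqcup_{j\in\bz_N}\sigma_j(\K_N)$, this forces $g=g\circ\sigma$ $\mu$-a.e. By (i), $g$ is constant $\mu$-a.e., so $W$ is a scalar.

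The main obstacle is the asymptotic-independence calculation in (i): one must correctly set up a tail event as $A=\sigma^{-n}(B_n)$, identify the two summations above via the Markov property, and then invoke Proposition \ref{pr3.3} twice to obtain the uniform convergence of both the row vectors $(T^{(p)}\cdots T^{(n-1)})_{i_p,\cdot}$ and $\lambda^{(n)}$ to the common Perron--Frobenius vector $v$, uniformly in the auxiliary probabilities $P_{n,j}(B_n)$. Once this mixing is in hand, part (ii) reduces to the standard \emph{maximal abelian subalgebra $+$ ergodicity} argument, enabled by the nonvanishing condition \eqref{eq2.5.2}.
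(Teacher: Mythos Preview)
Your argument is correct. For (i), the paper takes a closely related but slightly differently packaged route: rather than proving tail-triviality, it establishes the mixing lemma
\[
\lim_{k\to\infty}\mu\bigl(\sigma^{-k}(E)\cap F\bigr)=\mu_\infty(E)\,\mu(F)
\]
(Lemma \ref{lem3.8}, with $\mu_\infty$ the stationary Markov measure for $(v,T^{(\infty)})$), verified on pairs of cylinders via exactly the same computation with Proposition \ref{pr3.3}, and then specializes to $E=F=A$ with $\sigma^{-1}(A)=A$; because the limit involves $\mu_\infty(A)$ rather than $\mu(A)$, an extra step with $\K_N\setminus A$ is needed to conclude. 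Your tail-triviality approach avoids introducing $\mu_\infty$ and yields the nominally stronger Kolmogorov $0$--$1$ law, at the price of carrying the $n$-dependent conditional probabilities $P_{n,j}(B_n)$ through the limit (which is harmless since the coefficients converge and these probabilities stay in $[0,1]$). For (ii), the paper simply invokes \cite[Theorem 2.13]{DJ14}; your explicit maximal-abelian-plus-ergodicity argument (using that $S_IS_I^*$ acts as $M_{\chi_{\C(I)}}$ and that $g=g\circ\sigma$ forces $g$ constant) is precisely the content behind that citation and makes the proof self-contained.
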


\begin{proof}
Let $T^{(\infty)}$ be the limit of the matrices $T^{(n)}$ as in Proposition \ref{pr2.13}. We know that $T^{(\infty)}$ is a stochastic matrix with positive entries. Let $v$ be the Perron-Frobenius positive probability left-eigenvector for $T^{(\infty)}$ as in Proposition \ref{pr3.3}. Let $\mu_\infty$ be the stationary Markov measure associated to $v$ and the constant sequence $T^{(\infty)}$.

We will prove the following
\begin{lemma}\label{lem3.8}
For any Borel sets $E$ and $F$ in $\K_N$
\begin{equation}
\lim_{k\rightarrow\infty}\mu(\sigma^{-k}(E)\cap F)=\mu_\infty(E)\mu(F).
\label{eq3.7.1}
\end{equation}
\end{lemma}
\begin{proof}
Since the unions of cylinder sets form a monotone class that generates the Borel sigma algebra on $\K_N$, it is enough to check \eqref{eq3.7.1} on cylinder sets. Take two finite words $I=i_1\dots i_l$ and $J=j_1\dots j_r$. For $k$ large, we have:
$$\mu(\sigma^{-k}(\C(I))\cap \C(J))=\mu(\{x_1x_2\dots\in\K_N : x_1=j_1,\dots, x_r=j_r,x_{k+1}=i_1,\dots, x_{k+l}=i_l\})$$
$$=\sum_{x_{r+1},\dots,x_k\in\bz_N}\mu(\C(j_1\dots j_rx_{r+1}\dots x_ki_1\dots i_l))$$
$$=\sum_{x_{r+1},\dots, x_k}\lambda_{j_1}T_{j_1,j_2}^{(1)}\dots T_{j_{r-1},j_r}^{(r-1)}T_{j_r,x_{r+1}}^{(r)}T_{x_{r+1},x_{r+2}}^{(r+1)}\dots T_{x_{k-1},x_k}^{(k-1)}T_{x_k,i_1}^{(k)}T_{i_1,i_2}^{(k+1)}\dots T_{i_{l-1},i_l}^{(k+l-1)}$$
$$=\mu(\C(J))(T^{(r)}T^{(r+1)}\dots  T^{(k)})_{j_r,i_1}T_{i_1,i_2}^{(k+1)}\dots T_{i_{l-1},i_l}^{(k+l-1)}\mbox{ and, by Proposition \ref{pr3.3}.}$$$$\stackrel{k\rightarrow\infty}{\rightarrow}\mu(\C(J))v_{i_1}T_{i_1,i_2}^{(\infty)}\dots T_{i_{l-1},i_l}^{(\infty)}=\mu(\C(J))\mu_\infty(\C(I)).$$
\end{proof}

Now take a Borel set $A$ with $\sigma^{-1}(A)=A$ and apply \eqref{eq3.7.1} with $E=F=A$: we have
$$\mu_\infty(A)\mu(A)=\lim_{k\rightarrow\infty}\mu(\sigma^{-k}(A)\cap A)=\mu(A).$$
This means that $\mu(A)=0$, or $\mu_\infty(A)=1$. The same argument can be applied to $\K_N\setminus A$ to obtain $\mu(\K_N\setminus A)=0$, or $\mu_\infty(\K_N\setminus A)=1$. Therefore, if $\mu(A)\neq 0$ then $\mu_\infty(A)=1$, so $\mu_\infty(\K_N\setminus A)=0$, which implies that $\mu(\K_N\setminus A)=0$. 

This shows that $\mu$ is ergodic. 

The statement in (ii) follows from (i) by \cite[Theorem 2.13]{DJ14}.
\end{proof}

\section{Equivalence of measures vs equivalence of representations}
Here we study equivalence and orthogonality of two quasi-stationary Markov measures and the similar problem for the associated representations of the Cuntz algebra $\O_N$. We begin with a dichotomy theorem that reminds us of, and has overlap with, the work of Kakutani \cite{Ka48}.

\begin{theorem}\label{th4.1}
Let $\mu$ and $\mu'$ be two quasi-stationary Markov measures. Then $\mu$ and $\mu'$ are either equivalent, or mutually singular. The corresponding representations of $\O_N$ are either equivalent or disjoint. 
\end{theorem}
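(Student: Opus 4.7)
My strategy is to adapt Kakutani's dichotomy theorem from infinite products to Markov measures, using the mixing result of Proposition \ref{pr3.3} in place of independence. Write $\mu = \mu_{\lambda, T}$ and $\mu' = \mu_{\lambda', T'}$ and let $\F_n$ denote the $\sigma$-algebra generated by cylinders of length $n$. Since all transition entries are positive, $\mu|_{\F_n}$ and $\mu'|_{\F_n}$ are mutually equivalent, and the Radon--Nikodym densities
$$Z_n(\omega)=\frac{d\mu'|_{\F_n}}{d\mu|_{\F_n}}(\omega)=\frac{\lambda'_{\omega_1}}{\lambda_{\omega_1}}\prod_{k=1}^{n-1}\frac{T'^{(k)}_{\omega_k,\omega_{k+1}}}{T^{(k)}_{\omega_k,\omega_{k+1}}}$$
form a nonnegative mean-one $\mu$-martingale. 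By martingale convergence $Z_n\to Z_\infty$ $\mu$-a.e., and by the Lebesgue decomposition $\mu'=h\mu+\mu'_s$ (with $\mu'_s\perp\mu$) we have $Z_\infty=h$ $\mu$-a.e. The whole dichotomy for the measures then reduces to showing $\mu(\{Z_\infty>0\})\in\{0,1\}$.

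I would next observe that $\{Z_\infty>0\}$ is a tail event, namely the event that $\sum_{k=1}^{\infty}\log(T'^{(k)}_{\omega_k,\omega_{k+1}}/T^{(k)}_{\omega_k,\omega_{k+1}})$ converges to a finite limit, which is unaffected by changing any finite prefix of $\omega$; hence it belongs to $\T:=\bigcap_r\sigma(\omega_{r+1},\omega_{r+2},\dots)$. The main technical step, and the principal obstacle, is then the tail $0$-$1$ law for $\mu$. For $A\in\T$, the Markov property gives $\mu(A|\F_r)=\mu(A|\omega_r)$; writing $A=\sigma^{-k}(A_k)$ with $A_k\in\B(\K_N)$ and applying Chapman--Kolmogorov one obtains
$$\mu(A|\omega_r=j)=\sum_{i\in\bz_N}\bigl(T^{(r)}T^{(r+1)}\cdots T^{(r+k)}\bigr)_{j,i}\,\mathbb{P}^{i,r+k+1}(A_k),$$
where $\mathbb{P}^{i,m}$ denotes the law of the forward chain starting from state $i$ at time $m$. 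By Proposition \ref{pr3.3} the entries $(T^{(r)}\cdots T^{(r+k)})_{j,i}$ converge to $v_i$ as $k\to\infty$, uniformly in $j$, and the terms $\mathbb{P}^{i,r+k+1}(A_k)$ are bounded by $1$, so the right-hand side admits a $k$-limit independent of $j$. Hence $\mu(A|\F_r)$ is the constant $\mu(A)$ for every $r$, and L\'evy's upward theorem $\bigl(\mu(A|\F_r)\to 1_A$ $\mu$-a.e.$\bigr)$ forces $\mu(A)\in\{0,1\}$.

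The measure dichotomy is then immediate: if $\mu(\{Z_\infty>0\})=0$ then $h=0$ $\mu$-a.e.\ and $\mu'=\mu'_s\perp\mu$; if $\mu(\{Z_\infty>0\})=1$ then $h>0$ $\mu$-a.e., so $\mu\ll h\mu=\mu'_a\le\mu'$ giving $\mu\ll\mu'$, and swapping $\mu$ and $\mu'$ in the same argument (working with the $\mu'$-martingale $V_n=1/Z_n$) yields $\mu'\ll\mu$. For the associated $\O_N$-representations I would use the monic description of Definition \ref{def2.11} and Theorem \ref{tha2.3}. In the equivalent case, the operator $Uf=\sqrt{d\mu'/d\mu}\,f$ is a unitary $L^2(\mu')\to L^2(\mu)$, and a direct computation with the explicit formulas for $f_i,f'_i$ yields the cocycle identity $f'_i(x)\sqrt{d\mu'/d\mu(x)}=f_i(x)\sqrt{d\mu'/d\mu(\sigma x)}$, so that $U$ intertwines the two Cuntz representations. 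In the singular case, any intertwiner $T:L^2(\mu')\to L^2(\mu)$ commuting with the actions of $\mathfrak A_N$ must be of the form $Tf=gf$ for some $g\in L^2(\mu)$, and boundedness of $T$ forces $|g|^2\mu\ll\mu'$; combined with $|g|^2\mu\ll\mu$ and $\mu\perp\mu'$ this forces $g=0$, hence $T=0$, so the representations are disjoint.
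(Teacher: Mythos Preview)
Your argument is essentially correct but takes a genuinely different route from the paper's. The paper argues \emph{representation-theoretically first}: by Theorem \ref{th3.7} each associated $\O_N$-representation is irreducible, so by Schur's lemma any two are either unitarily equivalent or disjoint; the measure dichotomy is then read off from the correspondence between nonnegative monic systems and measures established in \cite[Proposition 2.10, Theorem 2.12]{DJ14}. You instead argue \emph{measure-theoretically first}: you establish a tail $0$--$1$ law for $\mu$ directly from the mixing of Proposition \ref{pr3.3}, apply it to the tail event $\{Z_\infty>0\}$ to get the Kakutani-type dichotomy for $\mu,\mu'$, and only then build the intertwiner (or show there is none) by hand. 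Both routes ultimately rest on Proposition \ref{pr3.3}; the paper feeds it through ergodicity and irreducibility, while you use it to collapse the dependence on the initial state in the conditional probabilities. Your approach is more self-contained (no appeal to \cite{DJ14}), at the cost of length; the paper's is a two-line reduction once irreducibility is in place.

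One point in your write-up deserves tightening. In the tail $0$--$1$ step you write that ``the right-hand side admits a $k$-limit independent of $j$'' because the $\mathbb{P}^{i,r+k+1}(A_k)$ are bounded by $1$. That is not enough by itself: the $p_i^{(k)}:=\mathbb{P}^{i,r+k+1}(A_k)$ need not converge as $k\to\infty$, so neither need $\sum_i M^{(k)}_{j,i}p_i^{(k)}$. The correct observation is that the \emph{left} side $c_j:=\mu(A\mid\omega_r=j)$ is independent of $k$; hence for any $j_1,j_2$,
\[
c_{j_1}-c_{j_2}=\sum_i\bigl(M^{(k)}_{j_1,i}-M^{(k)}_{j_2,i}\bigr)p_i^{(k)}\longrightarrow 0\quad(k\to\infty),
\]
since $M^{(k)}_{j,i}\to v_i$ uniformly in $j$ and $|p_i^{(k)}|\le 1$. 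Thus $c_j$ is constant in $j$, equal to $\mu(A)$, and L\'evy's theorem finishes as you say. With this adjustment your proof stands.
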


\begin{proof}
By Theorem \ref{th3.7}, the two associated monic representations of $\O_N$ are irreducible. Therefore they are either equivalent or disjoint. By \cite[Proposition 2.10 and Theorem 2.12]{DJ14}, the two measures are therefore either equivalent or mutually singular. 
\end{proof}

\begin{proposition}\label{pr2.12}
Let $\mu$ and $\mu'$ be two quasi-stationary Markov measures associated to $(\lambda,T)$ and $(\lambda',T')$ respectively. Let $\pi(\O_N)=(S_i)_{i\in\bz_N}$ and $\pi'(\O_N)=(S_i')_{i\in\bz_N}$ be the two associated representations of the Cuntz algebra $\O_N$. Define $Z_\infty$ on $\K_N$ by
\begin{equation}
Z_\infty(x_1x_2\dots)=\frac{\lambda_{x_1}'}{\lambda_{x_1}}\limsup_n\prod_{k=1}^{n-1}\frac{T_{x_k,x_{k+1}}'^{(k)}}{T_{x_k,x_{k+1}}^{(k)}}
\label{eq2.12.1}
\end{equation}

The following statements are equivalent:
\begin{enumerate}
	\item The representations $\pi(\O_N)$ and $\pi'(\O_N)$ are equivalent.
	\item The measures $\mu$ and $\mu'$ are equivalent. 
	\item $\mu'(Z_\infty<\infty)=\mu(Z_\infty>0)=1$.
	\item For $\mu$- and $\mu'$- almost every $i_1i_2\dots \in\K_N$,
	\begin{equation}
\sum_{n=1}^\infty\left(1-\sum_{x_{n+1}\in\bz_N}\sqrt{T_{i_n,x_{n+1}}'^{(n)}}\sqrt{T_{i_n,x_{n+1}}^{(n)}}\right)<\infty.
\label{eq2.12.2}
\end{equation}
\end{enumerate}

Also, the following statements are equivalent:
\begin{enumerate}
	\item[(a)] The representations $\pi(\O_N)$ and $\pi'(\O_N)$ are disjoint.
	\item[(b)] The measures $\mu$ and $\mu'$ are mutually singular. 
	\item[(c)] $\mu'(Z_\infty=\infty)=1$ or $\mu(Z_\infty=0)=1$.
	\item[(d)] For $\mu'$-a.e. $i_1i_2\dots\in\K_N$,
	\begin{equation}
\sum_{n=1}^\infty\left(1-\sum_{x_{n+1}\in\bz_N}\sqrt{T_{i_n,x_{n+1}}'^{(n)}}\sqrt{T_{i_n,x_{n+1}}^{(n)}}\right)=\infty.
\label{eq2.12.3}
\end{equation}
\end{enumerate}
\end{proposition}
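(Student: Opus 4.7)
The plan is to reduce the proposition to measure-theoretic statements and then apply a Kakutani-style martingale dichotomy. The equivalences (i)$\Leftrightarrow$(ii) and (a)$\Leftrightarrow$(b) are immediate from the irreducibility of the two monic representations (Theorem \ref{th3.7}) together with \cite[Proposition 2.10, Theorem 2.12]{DJ14}, exactly as in the proof of Theorem \ref{th4.1}. It remains to prove the measure-theoretic equivalences.

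For the measure side, let $\F_n$ be the $\sigma$-algebra generated by length-$n$ cylinders, $P_n := \mu|_{\F_n}$, $Q_n := \mu'|_{\F_n}$, and compute directly from \eqref{eq2.1.4} that
$$Z_n(x_1x_2\ldots) := \frac{dQ_n}{dP_n}(x_1x_2\ldots) = \frac{\lambda'_{x_1}}{\lambda_{x_1}}\prod_{k=1}^{n-1}\frac{T'^{(k)}_{x_k,x_{k+1}}}{T^{(k)}_{x_k,x_{k+1}}},$$
so $\limsup_n Z_n = Z_\infty$ from \eqref{eq2.12.1}. Stochasticity of $T'^{(n-1)}$ gives $E_\mu[Z_n/Z_{n-1}\mid\F_{n-1}] = 1$, making $Z_n$ a nonnegative $\mu$-martingale; by the symmetric argument $1/Z_n$ is a nonnegative $\mu'$-martingale. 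The Lebesgue-decomposition form of martingale convergence (cf.\ \cite{EnSh80}) then yields: in the equivalent case $Z_\infty = d\mu'/d\mu$ is finite positive both $\mu$- and $\mu'$-a.e., while in the singular case Lebesgue decomposition forces $Z_\infty = 0$ $\mu$-a.s., and the dual martingale forces $Z_\infty = \infty$ $\mu'$-a.s. Combined with the dichotomy of Theorem \ref{th4.1}, this gives (ii)$\Leftrightarrow$(iii) and (b)$\Leftrightarrow$(c) with no separate zero-one law needed.

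For the Hellinger characterization, set $r_n := Z_n/Z_{n-1}$ (with $Z_0 := 1$); each $r_n \ge 0$ has $E_\mu[r_n \mid \F_{n-1}] = 1$, so $Z_n = \prod_{k=1}^n r_k$ is in the exact form of the Kakutani criterion for conditional product-martingales: $Z_n$ is uniformly integrable under $\mu$ (equivalently $\mu' \ll \mu$, so by dichotomy $\mu \sim \mu'$) iff $\sum_{n\ge 1}(1 - E_\mu[\sqrt{r_n}\mid \F_{n-1}]) < \infty$ $\mu$-a.s. The Markov property then computes
$$E_\mu[\sqrt{r_n}\mid\F_{n-1}] = \sum_{j\in\bz_N} T^{(n-1)}_{x_{n-1},j}\sqrt{T'^{(n-1)}_{x_{n-1},j}/T^{(n-1)}_{x_{n-1},j}} = \sum_{j\in\bz_N}\sqrt{T^{(n-1)}_{x_{n-1},j}\,T'^{(n-1)}_{x_{n-1},j}},$$
which is exactly the Hellinger expression in (iv) (with $i_{n-1} = x_{n-1}$). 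Since this expression is symmetric in $T^{(n)}$ and $T'^{(n)}$, the analogous argument under $\mu'$ with the reciprocal martingale produces the same series as an equivalent condition $\mu'$-a.s. Conjoining yields (ii)$\Leftrightarrow$(iv), and negation together with Theorem \ref{th4.1} gives (b)$\Leftrightarrow$(d). The main obstacle is applying the Kakutani dichotomy in its conditional (Markov) form rather than the classical independent-product form; the required generalization to nonnegative martingales $\prod r_n$ with $E[r_n \mid \F_{n-1}] = 1$ is standard, and once in hand the symmetry of the Hellinger expression reduces the asymmetry between (iv) (``both $\mu$- and $\mu'$-a.e.'') and (d) (``$\mu'$-a.e.\ only'') to a bookkeeping consequence of the dichotomy.
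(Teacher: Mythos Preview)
Your proposal is correct and follows essentially the same route as the paper: reduce (i)$\Leftrightarrow$(ii) and (a)$\Leftrightarrow$(b) to the monic-representation results of \cite{DJ14}, identify $Z_n=dQ_n/dP_n$ on the cylinder filtration and invoke the martingale Lebesgue-decomposition criterion (the paper cites \cite[Corollary~6]{EnSh80}) for (iii)/(c), and then use the conditional Kakutani--Hellinger criterion (the paper cites \cite[Theorem~4, p.~528]{Shi80}) for (iv)/(d) via the same computation of $E\bigl[\sqrt{Z_{n+1}/Z_n}\,\big|\,\F_n\bigr]$. The only cosmetic differences are that your appeal to irreducibility for (i)$\Leftrightarrow$(ii) is unnecessary (the \cite{DJ14} results already give this for nonnegative monic systems), and where you call the conditional Kakutani criterion ``standard'' the paper supplies the explicit Shiryaev reference.
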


\begin{proof}
The equivalence of (i) with (ii) and of (a) with (b) follow from \cite[Proposition 2.10,Theorem 2.12]{DJ14}. For the equivalence of (ii) with (iii) and of (b) with (c) we use \cite[Corollary 6]{EnSh80}.
We have the sigma algebras $\F_n$ generated by cylinder sets $\C(I)$ with $I$ of length $n$ and we let $P_n,Q_n$ be the restrictions of $\mu,\mu'$ to $\F_n$. Then the measures $P_n$ and $Q_n$ are equivalent and 
$$Z_n(x_1x_2\dots)=\frac{dQ_n}{dP_n}(x_1x_2\dots)=\frac{\lambda'_{x_1}}{\lambda_{x_1}}\prod_{k=1}^{n-1}\frac{T_{x_k,x_{k+1}}'^{(k)}}{T_{x_k,x_{k+1}}^{(k)}}.$$
Then $Z_\infty=\limsup Z_n$. Everything now follows from \cite[Corollary 6]{EnSh80}.

For the equivalence (ii)$\Leftrightarrow$(iv) and (b)$\Leftrightarrow$(d), we use \cite[Theorem 4, page 528]{Shi80}, which asserts, in our context, that $\mu'\ll\mu$ if and only if 
\begin{equation}
\sum_{n=1}^\infty\left(1-\be\left(\sqrt{\frac{Z_{n+1}}{Z_n}} \,|\,\F_{n}\right)\right)<\infty
\label{eq2.12.4}
\end{equation}
$\mu'$-a.e., and $\mu\perp\mu'$ if the sum in \eqref{eq2.12.4} is infinite $\mu'$-a.e.
The equivalences then follow directly from this because 
$$\be\left(\sqrt{\frac{Z_{n+1}}{Z_n}} \,|\,\F_{n}\right)(i_1\dots i_n)=\sum_{x_{n+1}\in\bz_N}\sqrt{\frac{T_{i_n,x_{n+1}}'^{(n)}}{T_{i_n,x_{n+1}}^{(n)}}}\cdot T_{i_n,x_{n+1}}^{(n)}.$$
\end{proof}

\begin{corollary}\label{cor2.14}
Let $\mu$ and $\mu'$ be two quasi-stationary Markov measures associated to $(\lambda,T)$ and $(\lambda',T')$ respectively. Let $\pi(\O_N)=(S_i)_{i\in\bz_N}$ and $\pi'(\O_N)=(S_i')_{i\in\bz_N}$ be the two associated representations of the Cuntz algebra $\O_N$. Suppose that for every $i,j\in\bz_N$,
\begin{equation}
\sum_{n=1}^\infty|T_{i,j}^{(n)}-T_{i,j}'^{(n)}|<\infty.
\label{eq2.14.1}
\end{equation}
Then the measures $\mu$ and $\mu'$ are equivalent and the representations $\pi(\O_N)$ and $\pi'(\O_N)$ are also equivalent. 
\end{corollary}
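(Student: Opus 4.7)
The plan is to verify condition (iv) of Proposition \ref{pr2.12} holding everywhere on $\K_N$ (not merely almost everywhere), which via the equivalence (ii)$\Leftrightarrow$(iv) in that proposition gives equivalence of $\mu$ and $\mu'$; Theorem \ref{th4.1} then upgrades this to equivalence of the representations $\pi(\O_N)$ and $\pi'(\O_N)$. So everything reduces to estimating
\[
\sum_{n=1}^\infty\Bigl(1 - \sum_{x\in\bz_N}\sqrt{T'^{(n)}_{i_n,x}}\sqrt{T^{(n)}_{i_n,x}}\Bigr)
\]
for an arbitrary sequence $i_1 i_2 \dots \in \K_N$.

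The first step I would take is to exploit that both $T^{(n)}$ and $T'^{(n)}$ are stochastic, $\sum_x T^{(n)}_{i,x}=\sum_x T'^{(n)}_{i,x}=1$. Writing $1 = \tfrac{1}{2}\sum_x T^{(n)}_{i_n,x} + \tfrac{1}{2}\sum_x T'^{(n)}_{i_n,x}$ and reorganizing (the Hellinger affinity identity) gives
\[
1 - \sum_{x\in\bz_N}\sqrt{T'^{(n)}_{i_n,x}}\sqrt{T^{(n)}_{i_n,x}} = \tfrac{1}{2}\sum_{x\in\bz_N}\Bigl(\sqrt{T^{(n)}_{i_n,x}} - \sqrt{T'^{(n)}_{i_n,x}}\Bigr)^{2}.
\]
Next, since both $\mu$ and $\mu'$ are quasi-stationary, Proposition \ref{pr2.13}(iii) applied separately to each family produces a uniform positive lower bound $c_0>0$ with $T^{(n)}_{i,j}, T'^{(n)}_{i,j}\geq c_0$ for all $n, i, j$. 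Using $(\sqrt{a}-\sqrt{b})^2 = (a-b)^2/(\sqrt{a}+\sqrt{b})^2$ together with $|a-b|\leq 1$ so that $|a-b|^2 \leq |a-b|$, this yields
\[
\Bigl(\sqrt{T^{(n)}_{i,j}}-\sqrt{T'^{(n)}_{i,j}}\Bigr)^{2}\leq \frac{|T^{(n)}_{i,j}-T'^{(n)}_{i,j}|}{4c_0}.
\]

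Finally, since $\bz_N$ is finite I can bound uniformly across all $i_1 i_2 \dots \in \K_N$:
\[
\sum_{n=1}^\infty\sum_{x\in\bz_N}\Bigl(\sqrt{T^{(n)}_{i_n,x}}-\sqrt{T'^{(n)}_{i_n,x}}\Bigr)^{2}\leq \frac{1}{4c_0}\sum_{i,j\in\bz_N}\sum_{n=1}^\infty|T^{(n)}_{i,j}-T'^{(n)}_{i,j}|,
\]
which is finite by the hypothesis \eqref{eq2.14.1}, since there are only $N^{2}$ pairs $(i,j)$. Combined with the first step, \eqref{eq2.12.2} holds for every $i_1 i_2 \dots \in \K_N$, so Proposition \ref{pr2.12} gives equivalence of the two measures and Theorem \ref{th4.1} gives equivalence of the two representations.

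The only step that is not a short deterministic manipulation is the uniform positivity of the entries, but that is exactly what Proposition \ref{pr2.13}(iii) delivers from the quasi-stationarity of each of $\mu$ and $\mu'$. In effect, the hypothesis \eqref{eq2.14.1} is strong enough to turn Proposition \ref{pr2.12}(iv) into a pointwise statement, bypassing any almost-everywhere subtleties.
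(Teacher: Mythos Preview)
Your proof is correct, but it takes a slightly different route from the paper's own argument. The paper verifies condition (iii) of Proposition \ref{pr2.12}: using the uniform lower bound $c=\inf_{i,j,n}T_{i,j}^{(n)}>0$ from Proposition \ref{pr2.13}, it shows directly that
\[
\sum_n\left|\frac{T'^{(n)}_{x_n,x_{n+1}}}{T^{(n)}_{x_n,x_{n+1}}}-1\right|\le\frac1c\sum_n\bigl|T'^{(n)}_{x_n,x_{n+1}}-T^{(n)}_{x_n,x_{n+1}}\bigr|<\infty,
\]
so the infinite product defining $Z_\infty$ converges to a strictly positive finite number for every $x\in\K_N$, and Proposition \ref{pr2.12}(iii) applies. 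You instead verify condition (iv) via the Hellinger identity and the estimate $(\sqrt a-\sqrt b)^2\le|a-b|/(4c_0)$. Both arguments hinge on exactly the same ingredient---the uniform entry lower bound supplied by Proposition \ref{pr2.13}---and then feed the hypothesis \eqref{eq2.14.1} into one of the equivalent criteria of Proposition \ref{pr2.12}. The paper's path is marginally shorter (no Hellinger rewriting, no need for the lower bound on the $T'$ entries), while yours is a textbook affinity computation; neither offers a real advantage over the other. One small remark: your final appeal to Theorem \ref{th4.1} is unnecessary, since the equivalence (i)$\Leftrightarrow$(ii) in Proposition \ref{pr2.12} already delivers equivalence of the representations once the measures are equivalent.
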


\begin{proof}
By Proposition \ref{pr2.13}, we have
$$c:=\inf_{i,j}\inf_n T_{i,j}^{(n)}>0.$$
Let $x_1x_2\dots\in\K_N$. We have 
$$\sum_n\left|\frac{T_{x_n,x_{n+1}}'^{(n)}}{T_{x_n,x_{n+1}}^{(n)}}-1\right|\leq \frac1c\sum_n|{T_{x_n,x_{n+1}}'^{(n)}}-{T_{x_n,x_{n+1}}^{(n)}}|$$$$
=\frac1c\sum_{i,j\in\bz_N}\sum_{n: (x_n,x_{n+1})=(i,j)}|T_{i,j}^{(n)}-T_{i,j}^{(n)}|<\infty.$$

Therefore the infinite product
$$Z_\infty(x_1x_2\dots)=\frac{\lambda'_{x_1}}{\lambda_{x_1}}\prod_{n=1}^\infty\frac{T_{x_n,x_{n+1}}'^{(n)}}{T_{x_n,x_{n+1}}^{(n)}}$$
is convergent to a positive number. 
With Proposition \ref{pr2.12}, we obtain the results. 
\end{proof}

\begin{theorem}\label{th2.15}
Let $\mu$ be a quasi-stationary Markov measure associated to $(\lambda,T)$. Let $T^{(\infty)}$ be the limit of the stochastic matrices $T^{(n)}$ (see Proposition \ref{pr2.12}). Let $\lambda_\infty$ be a positive probability row vector such that $\lambda_\infty T^{(\infty)}=\lambda_\infty$. (The existence of such a vector follows from the Perron-Frobenius theorem).  Suppose the following condition is satisfied, for all $i,j\in\bz_N$:
\begin{equation}
\sum_{n=1}^\infty n|T_{i,j}^{(n)}-T_{i,j}^{(n+1)}|<\infty.
\label{eq2.15.1}
\end{equation}
Let $\mu_\infty$ be the Markov measure associated to $\lambda_\infty$ and the constant sequence with fixed matrix $T^{(\infty)}$. Then
\begin{enumerate}
	\item The measures $\mu$ and $\mu_\infty$ are equivalent.
	\item The monic representations of $\O_N$ associated to $\mu$ and to $\mu_\infty$ are equivalent.
\end{enumerate}
\end{theorem}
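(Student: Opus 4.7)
The plan is to reduce Theorem \ref{th2.15} directly to Corollary \ref{cor2.14}. Corollary \ref{cor2.14} states that if two quasi-stationary Markov measures $\mu, \mu'$ (with transition sequences $T^{(n)}, T'^{(n)}$) satisfy $\sum_n |T_{i,j}^{(n)} - T'^{(n)}_{i,j}| < \infty$ for all $i,j$, then $\mu \sim \mu'$ and the associated representations are unitarily equivalent. We will apply this with $\mu' := \mu_\infty$ and the constant sequence $T'^{(n)} = T^{(\infty)}$.

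First I would verify that $\mu_\infty$ qualifies as a quasi-stationary Markov measure in the sense of Definition \ref{def2.1}. The initial vector $\lambda_\infty$ is a positive probability vector satisfying $\lambda_\infty T^{(\infty)} = \lambda_\infty$ (by hypothesis, coming from the Perron-Frobenius theorem applied to $T^{(\infty)}$, whose positivity is guaranteed by Proposition \ref{pr2.13}). The constant transition sequence $T'^{(n)} \equiv T^{(\infty)}$ has positive entries and trivially satisfies \eqref{eq2.1.5}, since each ratio $T'^{(n)}_{i,j}/T'^{(n+1)}_{i,j} = 1$ and the sum of zeros is finite. So $\mu_\infty$ is quasi-stationary.

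The key computation is showing the hypothesis of Corollary \ref{cor2.14} holds. For any $i,j \in \bz_N$, write $a_k := |T_{i,j}^{(k)} - T_{i,j}^{(k+1)}|$. Since $T_{i,j}^{(n)} \to T_{i,j}^{(\infty)}$ by Proposition \ref{pr2.13}, the triangle inequality via telescoping gives
\begin{equation*}
|T_{i,j}^{(n)} - T_{i,j}^{(\infty)}| \leq \sum_{k=n}^{\infty} a_k.
\end{equation*}
Summing in $n$ and swapping the order of summation (Fubini/Tonelli for nonnegative series),
\begin{equation*}
\sum_{n=1}^{\infty} |T_{i,j}^{(n)} - T_{i,j}^{(\infty)}| \leq \sum_{n=1}^\infty \sum_{k=n}^\infty a_k = \sum_{k=1}^\infty k\, a_k = \sum_{k=1}^\infty k\,|T_{i,j}^{(k)} - T_{i,j}^{(k+1)}| < \infty,
\end{equation*}
where the final inequality is precisely the hypothesis \eqref{eq2.15.1}.

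With the hypothesis of Corollary \ref{cor2.14} verified, part (i) ($\mu \sim \mu_\infty$) and part (ii) (equivalence of the two $\O_N$-representations) follow immediately. There is no real obstacle here — the content of the theorem is the observation that the weighted summability condition \eqref{eq2.15.1} is exactly what is needed to pass from $\ell^1$ summability of \emph{consecutive} differences (the quasi-stationary condition from Proposition \ref{pr2.13}) to $\ell^1$ summability of the differences with the \emph{limit}, which in turn is the hypothesis of Corollary \ref{cor2.14}. The main thing to be careful about is the order-of-summation swap and confirming that $\mu_\infty$ legitimately fits into the quasi-stationary framework developed in Section 3.
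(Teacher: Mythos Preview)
Your proof is correct and follows essentially the same approach as the paper: telescope $|T_{i,j}^{(n)}-T_{i,j}^{(\infty)}|\le\sum_{k\ge n}|T_{i,j}^{(k)}-T_{i,j}^{(k+1)}|$, swap the order of summation to obtain $\sum_k k\,|T_{i,j}^{(k)}-T_{i,j}^{(k+1)}|$, and invoke Corollary~\ref{cor2.14}. Your additional remark that $\mu_\infty$ is itself quasi-stationary (needed to apply Corollary~\ref{cor2.14}) is a welcome explicit check that the paper leaves implicit.
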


\begin{proof}
We have, for $i,j\in\bz_N$,
$$\sum_{n=1}^\infty|T_{i,j}^{(n)}-T_{i,j}^{(\infty)}|=\sum_{n=1}^\infty\left|T_{i,j}^{(n)}-\left(T_{i,j}^{(n)}+\sum_{k=n}^\infty(T_{i,j}^{(k+1)}-T_{i,j}^{(k)})\right)\right|\leq\sum_{n=1}^\infty\sum_{k=n}^\infty|T_{i,j}^{(k+1)}-T_{i,j}^{(k)}|$$
$$=\sum_{k=1}^\infty k|T_{i,j}^{(k+1)}-T_{i,j}^{(k)}|<\infty$$
With Corollary \ref{cor2.14} we get (i) and (ii). 
\end{proof}

\begin{acknowledgements}
This work was partially supported by a grant from the Simons Foundation (\#228539 to Dorin Dutkay). We thank Sergii Bezuglyi  for conversations about ergodic theory. One of the authors has had very helpful conversations with Professor Sergii Bezuglyi about Markov measures.
\end{acknowledgements}

\bibliographystyle{alpha}	
\bibliography{eframes}

\end{document}